\newcommand{\norm}[1]{\left\Vert#1\right\Vert}
\newcommand{\abs}[1]{\left\vert#1\right\vert}
\newcommand{\set}[1]{\left\{#1\right\}}
\newcommand{\F}{\mathcal{F}}
\newcommand{\E}{\mathbb{E}}
\newcommand{\B}{\mathbb{B}}
\newcommand{\N}{\mathbb{N}}
\newcommand{\R}{\mathbb{R}}
\newcommand{\I}{\mathcal{I}}
\newcommand{\red}[1]{\textcolor{red}{#1}}
\newtheorem{thm}{Theorem}[section]
\newtheorem{lem}[thm]{Lemma}
\theoremstyle{definition}
\newtheorem{defn}[thm]{Definition}
\newtheorem{rem}[thm]{Remark}
\numberwithin{equation}{section}
\author[C. Zhang]{Chao Zhang}
\address{School of Statistics and Mathematics\\
          Zhejiang Gongshang University \\
          310018 Hangzhou, China}
\email{zaoyangzhangchao@163.com}
\author[W. Chen]{Wei Chen}
\address{School of Mathematics Sciences \\
          Yangzhou University \\
          225002 Yangzhou, China}
\email{weichen@yzu.edu.cn}
\author[P. D. Liu]{Peide Liu}
\address{School of Mathematics and Statistics \\
          Wuhan University \\
          430072 Wuhan, China}
\email{pdliu@whu.edu.cn}
 \thanks{ Supported by the National Natural
Science Foundation of China (No. 11271292 and No. 11101353), Zhejiang Provincial Natural Science Foundation of China (Grant No. LQ14A010001),
the Natural Science Foundation of Jiangsu Education Committee (No. 11KJB110018) and the Natural Science Foundation of Jiangsu Province (No. BK2012682).}
\keywords{ Lipschitz martingale spaces, Carleson measure, Carleson's inequality, martingale transforms, square function}
\subjclass[2010]{Primary: 60G42. Secondary: 42A61, 46B25.}
\begin{document}

%%%%%%%%%%%%%%%%%%%%%%%%%%%%%%%%%%%%%%%%%%%%%%%%%%%%%%
\title{A characterization of $BMO^\alpha$-martingale spaces by fractional Carleson measures}
%%%%%%%%%%%%%%%%%%%%%%%%%%%%%%%%%%%%%%%%%%%%%%%%%%%%%%

%%%%%%%%%%%%%%%%%%%%%%%%%%%%%%%%%%%%%%%%%%%%%%%%%%%%%%
\begin{abstract}
We give a characterization of $BMO^\alpha$-martingale
spaces by using fractional Carleson measures.
We get the boudedness of
martingale transform and square function on $BMO^\alpha$-martingale
spaces easily by using this characterization. We also proved the martingale version of Carleson's inequality related with $BMO^\alpha$-martingales.
\end{abstract}
%%%%%%%%%%%%%%%%%%%%%%%%%%%%%%%%%%%%%%%%%%%%%%%%%%%%%%

\maketitle

%%%%%%%%%%%%%%%%%%%%%%%%%%%%%%%%%%%%%%%%%%%%%%%%%%%%%%
\section{Introduction}
%%%%%%%%%%%%%%%%%%%%%%%%%%%%%%%%%%%%%%%%%%%%%%%%%%%%%%

In Harmonic Analysis, the space of functions of bounded mean oscillation, or  $BMO$-space, naturally arises as the class of functions whose deviation from their means over cubes is bounded. We recall that a locally integrable function $f$ on $\R^n$ is in $BMO(\R)$-space if $\displaystyle \norm{f}_{BMO}=\sup_Q\frac{1}{\abs{Q}}\int_Q\abs{f(x)-f_Q}dx,$ where $\abs{Q}$ is the Lebesgue measure of the cube $Q$ in $\R^n$, $f_Q$ denotes the average(or mean value) of $f$ on $Q$, and the supremum is taken over all cubes $Q$ in $\R^n$.  It is a good substitution of the $L^\infty$ space, as a dual space of Hardy space $H^1$, also in the interpolation theory. Also, Carleson measures are among the most important tools in Harmonic Analysis. A positive measure $\mu$ on $\R^{n+1}_+$ is called a Carleson measure if $\displaystyle \norm{\abs{\mu}}=\sup_Q \frac{\mu({Q\times (0,\ l(Q)]})}{\abs{Q}}$, where $l(Q)$ denotes the side length of the cube $Q$. Fefferman and Stein found that the $BMO$-space has a natural and  deep relationship with Carleson measures, see \cite{Fefferman, Grafakos}. In \cite{Wu}, the authors  studied the relationship between the
function in Morry space and a general kind Carleson measure. This general kind Carleson measure is defined as follows. For $0<p<1,$ a positive measure $\mu$ on $\R^{n+1}_+$ is called a bounded $p$-Carleson measure if $\displaystyle \norm{\abs{\mu}}=\sup_Q \frac{\mu({Q\times (0,\ l(Q)]})}{\abs{Q}^{1+p}}$. In this paper we will study an analogue characterization in martingale spaces.

Let $(\Omega, \F, P)$ be a complete probability space. For $1\le
p<\infty$ the usual $L_p$-space of strong $p$-integrable
scalar-valued functions on $(\Omega, \F, P)$ will be denoted by
$L^p(\Omega)$ or simply by $L^p$. Let $\set{\F_n}_{n\ge 0}$ be an
increasing sequence of sub-$\sigma$-fields of $\F$ such that
$\F=\vee \F_n.$ We call a sequence $f=\set{f}_{n\ge 0}$ in $L^1$ to
be a martingale if $\E(f_{n+1}|\F_n)=f_n$ for every $n\ge 0.$ Let
$d_nf=f_n-f_{n-1}$ with the convention that $f_-1=0.$
$\set{d_nf}_{n\ge 0}$ is the martingale difference sequence of $f$.
To avoid unnecessary convergence problem on infinite series we will
assume that all martingales considered in the sequel are finite,
unless explicitly stated otherwise. We will adopt the convention
that a martingale $f=\set{f_n}_{n\ge 0}$ will be identified with its
final value $f_\infty$ whenever the latter exists. And, if $f\in
L^1$ we will denote again by $f$ the associated martingale
$\set{f_n}$ with $f_n=\E(f|\F_n).$ We refer the reader to
\cite{Liu, Long, Weisz} for more information on martingale theory.

The main object of this paper is the $BMO^\alpha$-martingale space
given in the following.

\begin{defn}[$BMO^\alpha$-martingale space]\label{def of BMO alpha}
Let $0\le \alpha\le 1$, $\set{\F_n}_{n\ge 0}$ be an increasing
sequence of $\sigma$-algebras in a probability space $(\Omega, \F,
P)$, $f=\set{f_n}_{n\ge 0}$ an $L^2$-martingale relative to
$\set{\F_n}_{n\ge 0}$. We say that $f$ belongs to $BMO^\alpha$, if
\begin{equation*}
\sup_{n\ge 0}\sup_{A\in \F_n}P(A)^{-1/2-\alpha}\Big(\int_A
|f-f_{n-1}|^2dP\Big)^{1/2}<\infty.
\end{equation*}
The norm in the space $BMO^\alpha$ is $\displaystyle
\norm{f}_{BMO^\alpha}=\sup_{n\ge 0}\sup_{A\in
\F_n}P(A)^{-1/2-\alpha}\Big(\int_A |f-f_{n-1}|^2dP\Big)^{1/2}.$
\end{defn}
Note that $BMO^0$-martingale space is just the $BMO$-martingale
space.

In fact, if we replace the condition in Definition \ref{def of BMO alpha} by
\begin{equation*}
\sup_{n\ge 0}\sup_{A\in \F_n}P(A)^{-1/p-\alpha}\Big(\int_A
|f-f_{n-1}|^pdP\Big)^{1/p}<\infty,
\end{equation*}for any $1\le p <\infty$ and martingales $f\in L^p$-martingale space, the space is equivalent with the space defined in Definition \ref{def of BMO alpha}.

Also, we can use another definition of $BMO^\alpha$-space which is equivalent with Definition \ref{def of BMO alpha}, and we can find it in \cite{Liu, Long} as in the following.

Let $\I^{(n)}$ be the set of all $\F_n$-atoms, $n\ge 0.$ Denote
\begin{equation*}
\omega_n=\sum|I^{(n)}|\chi_{I^{(n)}},
\end{equation*}
where the sum is taken over all $I^{(n)}\in \I^{(n)}$.
\begin{defn}[$BMO^\alpha$-martingale spaces]\label{def of BMO alpha anoth}
Let $0\le \alpha\le 1$, $\set{\F_n}_{n\ge 0}$ be an increasing sequence of $\sigma$-algebras in a probability space $(\Omega, \F, P)$, $f=\set{f_n}_{n\ge 0}$ an $L^2$-martingale relative to $\set{\F_n}_{n\ge 0}$. We say that $f$ belongs to $BMO^\alpha$, if
\begin{equation*}
\sup_n\norm{\omega_n^{-\alpha}E(|f-f_{n-1}|^2|\F_n|)^{1/2}}_\infty<\infty.
\end{equation*}
The norm in the space $BMO^\alpha$-martingale space is
$\norm{f}_{BMO^\alpha}=\sup_n\norm{\omega_n^{-\alpha}E(|f-f_{n-1}|^2|\F_n|)^{1/2}}_\infty.$
\end{defn}

 The classical notion of the general Carleson measure in Harmonic
Analysis has the following martingale analogue.

\begin{defn}[Bounded $\alpha$-Carleson measure]
Let $\mu$ be a nonnegative measure on $\Omega\times \N$, where $\N$
is equipped with the counting measure $dm$. $\mu$ is called a
bounded $\alpha$-Carleson measure$(0\le \alpha<1)$ if $$\norm{\abs{\mu}}_\alpha:=\sup_\tau
\frac{\mu(\hat{\tau})}{P(\tau<\infty)^{1+2\alpha}}<\infty,$$ where
the supremum runs over all stopping times $\tau$ and  $\hat{\tau}$
denotes the ``tent" over $\tau:$ $$\hat{ \tau}=\set{(\omega,
k)\in\Omega\times \N: k\ge \tau(\omega), \tau(\omega)<\infty}.$$
\end{defn}
When $\alpha=0$, the $\alpha$-Carleson measure is just the Carleson
measure in martingale theory. In \cite{Jiao}, the author studied the
relationship between the  Carleson measure and vector-valued
$BMO$-martingale space. And for the scalar-valued case, see
\cite{Long}. In this paper, we will characterize martingales in
$BMO^\alpha$-martingale space in terms of $\alpha$-Carleson
measures.  In fact, we have the following.

\begin{thm}\label{Thm:carleson chara}
The following statements are equivalent:
\begin{enumerate}[(I)]
  \item $f\in BMO^\alpha$;
  \item the measure $\abs{d_kf}^2dP\otimes dm$ is a bounded $\alpha$-Carleson measure, i.e.
  \begin{equation*}
   \sup_\tau \frac{1}{P(\tau<\infty)^{1+2\alpha}}\int_{\widehat{\tau}}
   \abs{d_kf}^2dP\otimes dm<\infty.
  \end{equation*}
\end{enumerate}
\end{thm}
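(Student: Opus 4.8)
The whole argument will pivot on a single identity relating the Carleson integral over a tent to the quantities defining $BMO^\alpha$; write $\mu=\abs{d_kf}^2\,dP\otimes dm$. First I would record the orthogonality of the martingale differences: for fixed $n$ and $k>j\ge n$ one has $\E\big(d_kf\,d_jf\,|\,\F_n\big)=\E\big(d_jf\,\E(d_kf\,|\,\F_{k-1})\,|\,\F_n\big)=0$, so that
\[
\E\big(\abs{f-f_{n-1}}^2\,|\,\F_n\big)=\E\Big(\sum_{k\ge n}\abs{d_kf}^2\,\Big|\,\F_n\Big).
\]
Integrating over an arbitrary $A\in\F_n$ gives $\int_A\abs{f-f_{n-1}}^2\,dP=\int_A\sum_{k\ge n}\abs{d_kf}^2\,dP$. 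Writing a stopping time $\tau$ as the disjoint union of its level sets $\{\tau=n\}\in\F_n$ and unwinding the tent as $\widehat\tau=\bigcup_{n\ge0}\big(\{\tau=n\}\times\{k\in\N:k\ge n\}\big)$, one arrives at the master identity
\[
\int_{\widehat\tau}\abs{d_kf}^2\,dP\otimes dm=\sum_{n\ge0}\int_{\{\tau=n\}}\abs{f-f_{n-1}}^2\,dP .
\]

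For the implication $(II)\Rightarrow(I)$ I would feed into the Carleson hypothesis the stopping times $\tau_{A,n}$ equal to $n$ on $A$ and to $+\infty$ off $A$, where $A\in\F_n$; these are genuine stopping times precisely because $A\in\F_n\subseteq\F_m$ for all $m\ge n$, and $P(\tau_{A,n}<\infty)=P(A)$. For such a $\tau_{A,n}$ the master identity collapses to $\int_{\widehat{\tau_{A,n}}}\abs{d_kf}^2\,dP\otimes dm=\int_A\abs{f-f_{n-1}}^2\,dP$, so the $\alpha$-Carleson bound says precisely that $\int_A\abs{f-f_{n-1}}^2\,dP\le\norm{\abs{\mu}}_\alpha\,P(A)^{1+2\alpha}$; taking the supremum over $n\ge0$ and $A\in\F_n$ and using Definition \ref{def of BMO alpha} gives $\norm{f}_{BMO^\alpha}^2\le\norm{\abs{\mu}}_\alpha$.

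For $(I)\Rightarrow(II)$ I would again use the master identity for a general $\tau$, estimate each summand by Definition \ref{def of BMO alpha} applied to the set $\{\tau=n\}\in\F_n$, namely $\int_{\{\tau=n\}}\abs{f-f_{n-1}}^2\,dP\le\norm{f}_{BMO^\alpha}^2\,P(\{\tau=n\})^{1+2\alpha}$, and sum in $n$. Since the sets $\{\tau=n\}$ are pairwise disjoint with $\sum_{n}P(\{\tau=n\})=P(\tau<\infty)$, it then suffices to invoke the elementary inequality $\sum_{n}b_n^{\,p}\le\big(\sum_{n}b_n\big)^{p}$, valid for the exponent $p=1+2\alpha\ge1$ and any $b_n\ge0$ (it follows from $b_n\le\sum_jb_j$ since $p-1\ge0$), to obtain $\int_{\widehat\tau}\abs{d_kf}^2\,dP\otimes dm\le\norm{f}_{BMO^\alpha}^2\,P(\tau<\infty)^{1+2\alpha}$; as $\tau$ is arbitrary, $\norm{\abs{\mu}}_\alpha\le\norm{f}_{BMO^\alpha}^2$.

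I expect the only genuinely delicate point to be this last summation. The crude bound $P(\{\tau=n\})^{1+2\alpha}\le P(\{\tau=n\})$ would yield merely $\int_{\widehat\tau}\abs{d_kf}^2\,dP\otimes dm\le\norm{f}_{BMO^\alpha}^2\,P(\tau<\infty)$, which is the wrong power of $P(\tau<\infty)$ once $\alpha>0$; one really has to exploit the superadditivity of $t\mapsto t^{1+2\alpha}$ across the disjoint level sets of $\tau$. Everything else — the orthogonality identity, the verification that $\tau_{A,n}$ is a stopping time, and the description of the tent — is routine; and one could equally well run the $(I)\Rightarrow(II)$ estimate through the atomic form in Definition \ref{def of BMO alpha anoth} by further splitting each $\{\tau=n\}$ into $\F_n$-atoms, but the level-set decomposition already suffices.
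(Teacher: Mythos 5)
Your argument is correct, and it is genuinely different from (and in one respect sharper than) the paper's. The paper first proves a stopping--time reformulation of the norm (Lemma \ref{Lem:BMO a L2}), then for $(II)\Rightarrow(I)$ passes to the started martingale $\tilde f_k=f_{k\vee\tau}-f_\tau$ and invokes the Burkholder--Gundy inequality to compare $\norm{f-f_{\tau-1}}_{L^2}$ with the tail square function, which yields only a two--sided equivalence with unspecified constants; for $(I)\Rightarrow(II)$ it verifies the Carleson bound essentially only for the special stopping times $\tau_A$. You instead exploit the exact $L^2$ orthogonality of martingale differences conditioned on $\F_n$, which upgrades Burkholder--Gundy to the identity
\begin{equation*}
\int_{\widehat\tau}\abs{d_kf}^2\,dP\otimes dm=\sum_{n\ge0}\int_{\{\tau=n\}}\abs{f-f_{n-1}}^2\,dP=\norm{(f-f_{\tau-1})\chi_{\{\tau<\infty\}}}_{L^2}^2,
\end{equation*}
so both implications drop out of one formula and you even get the equality $\norm{\abs{\mu}}_\alpha=\norm{f}_{BMO^\alpha}^2$ rather than mere comparability. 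You are also right that the summation step is the delicate point: your explicit use of the superadditivity $\sum_n b_n^{1+2\alpha}\le\bigl(\sum_n b_n\bigr)^{1+2\alpha}$ over the level sets of a general $\tau$ is exactly what is needed to treat arbitrary stopping times in $(I)\Rightarrow(II)$, a step the paper leaves implicit (the same inequality is silently used, written as an equality, in its proof of Lemma \ref{Lem:BMO a L2}). The only mild caveat is that the interchange of the sum $\sum_{k\ge n}$ with the conditional expectation requires the standing assumption that the martingales are finite (or $L^2$-convergence of $f_m\to f$), which the paper does assume; with that in place your proof is complete.
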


Related with Carleson measures, there is a famous Carleson's inequality in Harmonic Analysis which was first proved by Carleson.
\begin{thm}\label{Thm:Carleson ine}
For any Carleson measure $\mu$ and every $\mu$-measurable function $f$ on $\R^{n+1}_+$ we have
\begin{equation*}
\int_{\R^{n+1}_+}\abs{f(x,t)}^p\ d\mu(x,t)\le C_n \norm{\abs{\mu}}\int_{\R^n}(f^*(x))^p dx
\end{equation*}
for all $0<p<\infty$, where $\displaystyle f^*(x)=\sup_{(y,t)\in \{\abs{y-x}<t\}}\abs{f(x,t)}$.
\end{thm}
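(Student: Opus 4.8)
The plan is to run the classical distribution-function (layer-cake) argument, which reduces the whole inequality to one geometric estimate on Carleson boxes. We may clearly assume $\int_{\R^n}(f^*(x))^p\,dx<\infty$, for otherwise there is nothing to prove. Writing $\Gamma(x)=\set{(y,t)\in\R^{n+1}_+:\abs{y-x}<t}$ for the cone with vertex $x$, so that $f^*(x)=\sup_{(y,t)\in\Gamma(x)}\abs{f(y,t)}$, one first checks that $f^*$ is lower semicontinuous, so that the level set $O_\lambda:=\set{x\in\R^n:f^*(x)>\lambda}$ is open; moreover $\abs{O_\lambda}<\infty$ (hence $O_\lambda\neq\R^n$) for each $\lambda>0$, since $\lambda^p\abs{O_\lambda}\le\int_{\R^n}(f^*)^p<\infty$. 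Since a Carleson measure is $\sigma$-finite ($\R^{n+1}_+$ being exhausted by boxes $Q\times(0,l(Q)]$ of finite $\mu$-mass), Tonelli's theorem gives
\begin{equation*}
\int_{\R^{n+1}_+}\abs{f(x,t)}^p\,d\mu(x,t)=p\int_0^\infty\lambda^{p-1}\,\mu\big(\set{(x,t)\in\R^{n+1}_+:\abs{f(x,t)}>\lambda}\big)\,d\lambda .
\end{equation*}
The decisive observation is the inclusion $\set{(x,t)\in\R^{n+1}_+:\abs{f(x,t)}>\lambda}\subseteq\widehat{O_\lambda}$, where we write $B(x,t)=\set{y\in\R^n:\abs{y-x}<t}$ and $\widehat{O}=\set{(x,t)\in\R^{n+1}_+:B(x,t)\subseteq O}$ for the tent over an open set $O$: indeed, if $\abs{f(x,t)}>\lambda$ then $(x,t)\in\Gamma(y)$ for every $y\in B(x,t)$, hence $f^*(y)\ge\abs{f(x,t)}>\lambda$, i.e. $B(x,t)\subseteq O_\lambda$. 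So it suffices to prove
\begin{equation*}
\mu(\widehat{O})\le C_n\,\norm{\abs{\mu}}\,\abs{O}\qquad\text{for every open }O\subsetneq\R^n .
\end{equation*}

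For this tent estimate I would take a Whitney decomposition $O=\bigcup_j Q_j$ into closed cubes with pairwise disjoint interiors satisfying $\operatorname{dist}(Q_j,O^c)\le c_n\,l(Q_j)$. If $(x,t)\in\widehat{O}$, then $x\in Q_j$ for some $j$, while $B(x,t)\subseteq O$ forces $t\le\operatorname{dist}(x,O^c)\le\operatorname{dist}(Q_j,O^c)+\operatorname{diam}(Q_j)\le c_n'\,l(Q_j)$; hence $(x,t)$ lies in the Carleson box $Q_j^*\times(0,l(Q_j^*)]$, where $Q_j^*$ is concentric with $Q_j$ and $l(Q_j^*)=\max(1,c_n')\,l(Q_j)$. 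Summing over $j$, using subadditivity of $\mu$, the Carleson condition on each box, and the fact that the Whitney cubes tile $O$,
\begin{equation*}
\mu(\widehat{O})\le\sum_j\mu\big(Q_j^*\times(0,l(Q_j^*)]\big)\le\norm{\abs{\mu}}\sum_j\abs{Q_j^*}=\max(1,c_n')^n\,\norm{\abs{\mu}}\sum_j\abs{Q_j}=C_n\,\norm{\abs{\mu}}\,\abs{O}.
\end{equation*}

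Combining the two steps, and then applying the layer-cake formula once more — this time to $f^*$ and Lebesgue measure on $\R^n$ — yields
\begin{equation*}
\int_{\R^{n+1}_+}\abs{f(x,t)}^p\,d\mu\le C_n\,\norm{\abs{\mu}}\,p\int_0^\infty\lambda^{p-1}\abs{O_\lambda}\,d\lambda=C_n\,\norm{\abs{\mu}}\int_{\R^n}\big(f^*(x)\big)^p\,dx ,
\end{equation*}
which is the assertion, for every $0<p<\infty$ and with $C_n$ depending only on $n$. The one step that is genuinely more than bookkeeping is the geometric one: realizing the tent $\widehat{O}$ as a countable union of Carleson boxes whose total Lebesgue volume is comparable to $\abs{O}$, for which the Whitney decomposition is the natural tool. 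The remaining points — lower semicontinuity (hence measurability) of $f^*$, $\sigma$-finiteness of $\mu$ to license Tonelli, $\mu$-measurability of the sets $\widehat{O_\lambda}$, and the harmless reduction to the case $\int(f^*)^p<\infty$ — are routine.
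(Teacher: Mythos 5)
Your argument is correct and complete; it is the standard tent/level-set proof of Carleson's embedding. Note, however, that the paper itself offers no proof of this statement: Theorem \ref{Thm:Carleson ine} is quoted as a classical fact from Harmonic Analysis (Carleson, Fefferman--Stein; see \cite{Fefferman, Grafakos}) purely to motivate the martingale analogue, Theorem \ref{Thm:carleson inequality}, so there is no in-paper proof to compare against. What is worth observing is that your proof is the exact Euclidean counterpart of the paper's proof of that martingale analogue: there, too, one applies the layer-cake formula, observes that $\set{(\omega,k):\abs{f_k(\omega)}>\lambda}$ is contained in the tent $\widehat{\tau}$ over the stopping time $\tau=\inf\set{n:\abs{f_n}>\lambda}$ (the martingale version of your inclusion $\set{\abs{f}>\lambda}\subseteq\widehat{O_\lambda}$), and then invokes the Carleson condition. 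The one ingredient of your proof with no martingale analogue is the Whitney decomposition, which you correctly identify as the genuinely geometric step: in the martingale setting the ``tent over an open set'' is already a tent over a single stopping time, so the Carleson condition applies directly, whereas in $\R^{n+1}_+$ one must first cover $\widehat{O_\lambda}$ by countably many Carleson boxes of total volume $C_n\abs{O_\lambda}$ -- this is also where the dimensional constant $C_n$ enters, and where the Euclidean estimate loses the sharp constant that the martingale version retains. Two small remarks: the displayed definition of $f^*$ in the statement contains a typo ($\abs{f(x,t)}$ should read $\abs{f(y,t)}$), which you have silently and correctly repaired by taking $f^*$ to be the nontangential maximal function; and your side remarks on lower semicontinuity of $f^*$, $\sigma$-finiteness of $\mu$, and the nonemptiness of $O_\lambda^c$ (needed for Whitney) are exactly the right routine points to check.
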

We can get a  Carleson's inequality related with $BMO^\alpha$-martingales. Unlike the inequality related with $BMO$-martingales, we can get it only for $1<p<\infty.$

\begin{thm}\label{Thm:carleson inequality}
Let $d\mu=\mu_kdP\otimes dm,$ with $\mu_k$'s being nonnegative random variables, be a bounded $\alpha$-Carleson measure$(0<\alpha<1)$, and $1<p<\infty$. Then for all adapted processes $f=(f_n)_{n\ge 0}$, we have
\begin{equation}\label{carleson ine}
\int_{\Omega\times N}|f_k|^p\mu_k dP\otimes dm\le \frac{p}{p-1}\norm{\abs{\mu}}_\alpha\norm{Mf}_{L^{\frac{1}{2\alpha}}}\norm{Mf}_{L^{p-1}}^{p-1},
\end{equation}
where $Mf$ is the maximal function of $f=(f_n)_{n\ge 0}$.
Conversely, if the above inequality, \eqref{carleson ine}, holds for some $1<p<\infty$, with $\displaystyle \frac{p}{p-1}\norm{\abs{\mu}}_\alpha$  replaced by a constant $C_p$, then $\mu$ is a bounded $\alpha$-Carleson measure and $\displaystyle \norm{\abs{\mu}}_\alpha\le C_p.$
\end{thm}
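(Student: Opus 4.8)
The plan is to run the classical layer-cake proof of Carleson's inequality with the $\alpha$-Carleson bound replacing the ordinary Carleson bound, keeping track of where the restriction $1<p<\infty$ enters; throughout I write $Mf=\sup_n\abs{f_n}$. I note first that $\mu$ is a finite measure: taking $\tau\equiv0$ gives $\widehat{\tau}=\Omega\times\N$, so $\mu(\Omega\times\N)\le\norm{\abs{\mu}}_\alpha<\infty$, which legitimizes all the Tonelli interchanges below (the integrands being nonnegative in any case).

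For the forward inequality \eqref{carleson ine}, I would fix $\lambda>0$ and set $\tau_\lambda=\inf\set{k:\abs{f_k}>\lambda}$ (with $\inf\emptyset=\infty$). Since $f$ is adapted, $\tau_\lambda$ is a stopping time, $\set{\tau_\lambda<\infty}=\set{Mf>\lambda}$, and $\set{(\omega,k):\abs{f_k(\omega)}>\lambda}\subseteq\widehat{\tau_\lambda}$; hence, by the definition of a bounded $\alpha$-Carleson measure,
\[
\mu\big(\set{(\omega,k):\abs{f_k}>\lambda}\big)\le\mu(\widehat{\tau_\lambda})\le\norm{\abs{\mu}}_\alpha\,P(\tau_\lambda<\infty)^{1+2\alpha}=\norm{\abs{\mu}}_\alpha\,P(Mf>\lambda)^{1+2\alpha}.
\]
Writing $\int_{\Omega\times\N}\abs{f_k}^p\,d\mu=p\int_0^\infty\lambda^{p-1}\mu\big(\set{(\omega,k):\abs{f_k}>\lambda}\big)\,d\lambda$ and inserting this estimate reduces matters to bounding $\int_0^\infty\lambda^{p-1}P(Mf>\lambda)^{1+2\alpha}\,d\lambda$.

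The next step is to absorb the extra exponent $2\alpha$. I would split $P(Mf>\lambda)^{1+2\alpha}=P(Mf>\lambda)^{2\alpha}\cdot P(Mf>\lambda)$ and bound the first factor by Chebyshev's inequality, $P(Mf>\lambda)\le\lambda^{-1/(2\alpha)}\norm{Mf}_{L^{1/(2\alpha)}}^{1/(2\alpha)}$, so that $P(Mf>\lambda)^{2\alpha}\le\lambda^{-1}\norm{Mf}_{L^{1/(2\alpha)}}$. What is left is
\[
p\,\norm{\abs{\mu}}_\alpha\,\norm{Mf}_{L^{1/(2\alpha)}}\int_0^\infty\lambda^{p-2}P(Mf>\lambda)\,d\lambda ,
\]
and since $p>1$ the last integral equals $\tfrac{1}{p-1}\E(Mf)^{p-1}=\tfrac{1}{p-1}\norm{Mf}_{L^{p-1}}^{p-1}$ by the layer-cake identity, which is exactly \eqref{carleson ine}. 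I expect this to be the only place where a genuine hypothesis is used: after spending one factor $\lambda^{-1}$ on the Chebyshev estimate the surviving integrand $\lambda^{p-2}$ is integrable near the origin precisely when $p>1$, whereas for $\alpha=0$ no such loss occurs, which is why the classical statement persists for all $0<p<\infty$.

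For the converse, assume \eqref{carleson ine} holds with $C_p$ in place of $\tfrac{p}{p-1}\norm{\abs{\mu}}_\alpha$; I would fix an arbitrary stopping time $\tau$ and test the inequality against the process $f_n=\chi_{\set{\tau\le n}}$, which is adapted because $\set{\tau\le n}\in\F_n$. Then $\abs{f_k}^p=\chi_{\set{\tau\le k}}$, so the left-hand side equals $\mu(\widehat{\tau})$, while $Mf=\sup_k\chi_{\set{\tau\le k}}=\chi_{\set{\tau<\infty}}$ gives $\norm{Mf}_{L^{1/(2\alpha)}}=P(\tau<\infty)^{2\alpha}$ and $\norm{Mf}_{L^{p-1}}^{p-1}=P(\tau<\infty)$. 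Substituting yields $\mu(\widehat{\tau})\le C_p\,P(\tau<\infty)^{1+2\alpha}$, and taking the supremum over all stopping times $\tau$ shows that $\mu$ is a bounded $\alpha$-Carleson measure with $\norm{\abs{\mu}}_\alpha\le C_p$. I do not anticipate a serious obstacle: the whole argument rests on two simple devices, namely the level stopping time $\tau_\lambda$, which turns a level set of $f$ into a tent so that the Carleson hypothesis applies, and the splitting of the exponent $1+2\alpha$ so that one copy of $P(Mf>\lambda)$ funds a crude Chebyshev bound while the other reconstitutes $\norm{Mf}_{L^{p-1}}$; everything else — the Tonelli interchanges, the adaptedness of the extremal test process, and the forced range $1<p<\infty$ — is routine, in accordance with the remark preceding the theorem.
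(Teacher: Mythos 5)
Your proposal is correct and follows essentially the same route as the paper: the level-set stopping time $\tau_\lambda$, the layer-cake representation, the splitting $P(Mf>\lambda)^{1+2\alpha}=P(Mf>\lambda)^{2\alpha}\cdot P(Mf>\lambda)$ with the first factor absorbed as $\lambda^{-1}\norm{Mf}_{L^{1/(2\alpha)}}$ (the paper phrases this via the weak norm $\norm{Mf}_{L^{1/(2\alpha),\infty}}$, which is exactly your Chebyshev step), and the same test process $f_n=\chi_{\set{\tau\le n}}$ for the converse. Your version is in fact slightly more careful than the paper's, which contains a small typo ($\norm{Mf}_{L^{p-1}}$ without the power $p-1$ in the final line).
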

The paper is organized as follows.  In Section
\ref{Section:Extension}, we give the proofs of Theorem
\ref{Thm:carleson chara} and \ref{Thm:carleson inequality}. In Section \ref{Section:App}, we get the
boundedness of the uniformly bounded martingale transform operator, the square function and the maximal operator on $BMO^\alpha$-martingale spaces easily by using Theorem \ref{Thm:carleson chara}.

Throughout this paper, the letter $C$ will denote a positive
constant which  may change from one instance to another.

%%%%%%%%%%%%%%%%%%%%%%%%%%%%%%%%%%%%%%%%%%%%%%%%%%%%%%%%%%%%%%%%%%%%%%%%%%%%%%%
\section{Proofs of the main theorems}\label{Section:Extension}
%%%%%%%%%%%%%%%%%%%%%%%%%%%%%%%%%%%%%%%%%%%%%%%%%%%%%%%%%%%%%%%%%%%%%%%%%%%%%%%

In this section, we will give the proof of Theorem \ref{Thm:carleson
chara}. In order to do this, we need the following lemma.

\begin{lem}\label{Lem:BMO a L2}
Let $f=\set{f_n}_{n\ge 0}$ be an $L^2$-martingale. Then
\begin{equation*}
\norm{f}_{BMO^\alpha}=\sup_\tau
P(\tau<\infty)^{-1/2-\alpha}\norm{f-f_{\tau-1}}_{L^2},
\end{equation*}
where the supremum  is taken over all stopping times $\tau.$
\end{lem}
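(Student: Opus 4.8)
The plan is to prove the two inequalities separately, using Definition \ref{def of BMO alpha} as the working definition of $\norm{f}_{BMO^\alpha}$. One direction is essentially trivial: a deterministic time $\tau\equiv n$ is a stopping time, and for such $\tau$ the quantity $P(\tau<\infty)^{-1/2-\alpha}\norm{f-f_{\tau-1}}_{L^2}$ need not match the supremum over $A\in\F_n$ in the definition, so I first need to relate the $L^2$-norm over $\Omega$ to the sup over atoms/sets. The cleanest route is to observe that for a general stopping time $\tau$ and any $A\in\F_\tau$ (the stopped $\sigma$-field), one has $\int_A|f-f_{\tau-1}|^2\,dP\le\norm{f}_{BMO^\alpha}^2\,P(A)^{1+2\alpha}$; taking $A=\{\tau<\infty\}$ gives the ``$\ge$'' inequality $\sup_\tau P(\tau<\infty)^{-1/2-\alpha}\norm{f-f_{\tau-1}}_{L^2}\le\norm{f}_{BMO^\alpha}$, provided that bound is established. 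Conversely, the ``$\le$'' inequality follows because any pair $(n,A)$ with $A\in\F_n$ is realized by the stopping time $\tau=n$ on $A$ and $\tau=\infty$ off $A$: then $P(\tau<\infty)=P(A)$ and $\int_\Omega|f-f_{\tau-1}|^2\,dP\ge\int_A|f-f_{n-1}|^2\,dP$ after checking that $f_{\tau-1}=f_{n-1}$ on $A$.

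So the core of the argument is the bound $\int_A|f-f_{\tau-1}|^2\,dP\le\norm{f}_{BMO^\alpha}^2\,P(\{\tau<\infty\})^{1+2\alpha}$ for stopping times $\tau$. The key steps in order: first, decompose $\{\tau<\infty\}=\bigcup_{n\ge0}\{\tau=n\}$ into the disjoint $\F_n$-measurable pieces $A_n:=\{\tau=n\}$, and on each piece $f_{\tau-1}=f_{n-1}$, so $\int_{\{\tau<\infty\}}|f-f_{\tau-1}|^2\,dP=\sum_n\int_{A_n}|f-f_{n-1}|^2\,dP$. Second, apply the definition of $BMO^\alpha$ to each $A_n\in\F_n$ to get $\int_{A_n}|f-f_{n-1}|^2\,dP\le\norm{f}_{BMO^\alpha}^2\,P(A_n)^{1+2\alpha}$. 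Third — and this is the delicate point — sum these estimates: since $1+2\alpha\ge1$ and the $A_n$ are disjoint with $\sum_n P(A_n)=P(\{\tau<\infty\})$, superadditivity of $t\mapsto t^{1+2\alpha}$ (equivalently, subadditivity of $\ell^{1/(1+2\alpha)}$-type, $\sum a_n^{1+2\alpha}\le(\sum a_n)^{1+2\alpha}$ for nonnegative $a_n$ when $1+2\alpha\ge1$) yields $\sum_n P(A_n)^{1+2\alpha}\le P(\{\tau<\infty\})^{1+2\alpha}$. Chaining these gives the claim, and hence $\sup_\tau P(\tau<\infty)^{-1/2-\alpha}\norm{f-f_{\tau-1}}_{L^2}\le\norm{f}_{BMO^\alpha}$.

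I expect the main obstacle to be purely bookkeeping: making sure that $f-f_{\tau-1}$ is correctly interpreted — here $f$ means the final value $f_\infty$, so I should note $f-f_{\tau-1}$ vanishes (or is controlled) off $\{\tau<\infty\}$ when $f_\infty$ exists, and on $\{\tau=n\}$ we genuinely have $f_{\tau-1}=f_{n-1}$ by definition of the stopped martingale and the convention $f_{-1}=0$. A second small subtlety is that $BMO^\alpha$ is defined via $L^2$-martingales with finite martingales assumed, so convergence of the series and the identification $f=f_\infty$ are not an issue, but one should remark on why $\norm{f-f_{\tau-1}}_{L^2}$ is finite in the first place (it is dominated by the $BMO^\alpha$ bound). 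The superadditivity inequality $\sum_n a_n^\beta\le\big(\sum_n a_n\big)^\beta$ for $\beta\ge1$ and $a_n\ge0$ is elementary (normalize so $\sum a_n=1$, then each $a_n\le1$ so $a_n^\beta\le a_n$), and I would state it inline rather than as a separate lemma. Finally I would note that the reverse inequality ``$\ge$'' actually only needs deterministic stopping times, so the supremum over all stopping times and the supremum in Definition \ref{def of BMO alpha} coincide, completing the proof.
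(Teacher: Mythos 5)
Your proposal is correct and follows essentially the same route as the paper: decompose $\{\tau<\infty\}$ into the slices $\{\tau=n\}\in\F_n$, apply the defining inequality on each slice, and sum via $\sum_n P(\tau=n)^{1+2\alpha}\le P(\tau<\infty)^{1+2\alpha}$ for one direction, and use the stopping time equal to $n$ on $A$ and $\infty$ off $A$ for the other. The only difference is that you make the superadditivity step explicit, which the paper leaves implicit (it even writes that step as an equality where an inequality is meant).
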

\begin{proof}
Assume that  $\norm{f}_{BMO^\alpha}<\infty$ and $\tau$ is any
stopping time. Then
\begin{align*}
P(\tau<\infty)^{-1-2\alpha}\norm{f-f_{\tau-1}}_{L^2}^2
&=P(\tau<\infty)^{-1-2\alpha}\sum_{n=1}^\infty\int_{\set{\tau=n}}|f-f_{n-1}|^2dP\\
&\le \norm{f}_{BMO^\alpha}^2P(\tau<\infty)^{-1-2\alpha}\sum_{n=1}^\infty
P(\tau=n)^{1+2\alpha}=\norm{f}_{BMO^\alpha}^2.
\end{align*}

Conversely, let $\beta=\sup_\tau
P(\tau<\infty)^{-1/2-\alpha}\norm{f-f_{\tau-1}}_{L^2}$. For any
$n\ge 1$ and $A\in \F_n$, define
\begin{equation*}
\tau_A(\omega)=
\begin{cases}
n,\quad\ \   \omega\in A, \\
\infty,\ \ \ \ \omega\notin A.
\end{cases}
\end{equation*}
Then,
\begin{align*}
P(A)^{-1-2\alpha}\int_A
\abs{f-f_{n-1}}^2dP=P(\tau_A<\infty)^{-1-2\alpha}\norm{f-f_{\tau_A-1}}_{L^2}^2\le
\beta^2.
\end{align*}
This proves that $\norm{f}_{BMO^\alpha}\le \beta.$ We complete the
proof of the lemma.
\end{proof}

With Lemma \ref{Lem:BMO a L2} above, we can give the proof of Theorem \ref{Thm:carleson chara} as follows.

\begin{proof}[Proof of Theorem \ref{Thm:carleson chara}]
$(I)\Rightarrow (II).$ Assume that $f\in BMO^\alpha$.  For any $1\le
n\le m$ and $A\in \F_n,$ we have %by Pisier's theorem, because R is 2-smooth and 2-convex (Liu's book, P189)
\begin{multline*}
\int_A \sum_{k=n}^{m}\abs{d_kf}^2dP \le
C\int_A\abs{f_m-f_{n-1}}^2dP\le C\int_A\abs{f-f_{n-1}}^2dP\le
C P(A)^{1+2\alpha}\norm{f}_{BMO^\alpha}^2.
\end{multline*}
This means
\begin{equation*}
P(A)^{-1-2\alpha}\int_A \sum_{k=n}^{\infty}\abs{d_kf}^2dP\le
C\norm{f}_{BMO^\alpha}^2, \quad \text{for any } n\ge 1 \text{ and }
A\in \F_n.
\end{equation*}
We define \begin{equation*}
\tau_A(\omega)=
\begin{cases}
n,\quad\ \   \omega\in A, \\
\infty,\ \ \ \ \omega\notin A.
\end{cases}
\end{equation*}
Then, since $\Omega\in \F_\tau$ always, we
get
\begin{align*}
\frac{1}{P(\tau_A<\infty)^{1+2\alpha}}\int_{\widehat{\tau_A}}\abs{d_kf}^2dP\otimes
dm
&=\frac{1}{P(\tau_A<\infty)^{1+2\alpha}}\int_\Omega\sum_{k=\tau_A}^\infty
\abs{d_kf}^2\chi_{\set{\tau_A<\infty}}dP \\
&=\frac{1}{P(A)^{1+2\alpha}}\int_A\sum_{k=n}^\infty
\abs{d_kf}^2 dP\le C\norm{f}^2_{BMO^\alpha}.
\end{align*}
By taking supremum over all stopping times, we know that the measure $\abs{d_kf}^2dP\otimes dm$ is a bounded $\alpha$-Carleson measure.

$(II)\Rightarrow (I).$ Let us consider the new $\sigma$-fields
$\set{\F_{k\vee\tau}}_{k\ge 1}$ and the corresponding martingale
$\tilde{f}$ generated by $f-f_\tau$. Then by Doob's stopping time
theorem,
$$\tilde{f}_k=\E\left(f-f_\tau|\F_{k\vee \tau}\right)
=\E\left(f|\F_{k\vee \tau}\right)-f_\tau=f_{k\vee \tau}-f_\tau.$$ By
Burkholder-Gundy's inequality, we get
\begin{multline*}
\norm{f-f_\tau}_{L^2}^2=\norm{\tilde{f}}_{L^2}^2 \le
C\norm{\left(\sum_{k=1}^\infty
\abs{d_k\tilde{f}}^2\right)^{1/2}}_{L^2}^2
=C\norm{\left(\sum_{k=1}^\infty \abs{f_{(k+1)\vee \tau}-f_{k\vee \tau}}^2\right)^{1/2}}_{L^2}^2\\
=C\norm{\left(\sum_{k=\tau}^\infty
\abs{f_{k+1}-f_{k}}^2\right)^{1/2}}_{L^2}^2=C\norm{\left(\sum_{k=\tau+1}^\infty
\abs{d_kf}^2\right)^{1/2}\chi_{\set{\tau<\infty}}}_{L^2}^2.
\end{multline*}
Therefore,
\begin{align}\label{est inte f tau}
\norm{f-f_{\tau-1}}_{L^2}^2 &\le C\left(\norm{f-f_{\tau}}_{L^2}^2
+\norm{f_\tau-f_{\tau-1}}_{L^2}^2\right)\nonumber\\
&=C\norm{\left(\sum_{k=\tau}^\infty
\abs{d_kf}^2\right)^{1/2}\chi_{\set{\tau<\infty}}}_{L^2}^2.
\end{align}
Hence, by Lemma \ref{Lem:BMO a L2} and (\ref{est inte f tau})  we obtain
\begin{multline*}
\norm{f}_{BMO^\alpha}\le C\sup_\tau P(\tau<\infty)^{-1/2-\alpha}
\norm{\left(\sum_{k=\tau}^\infty \abs{d_kf}^2\right)^{1/2}\chi_{\set{\tau<\infty}}}_{L^2}\\
=C\sup_\tau
\left(\frac{1}{P(\tau<\infty)^{1+2\alpha}}\int_{\widehat{\tau}}\abs{d_kf}^2dP\otimes
dm\right)^{1/2}<\infty.
\end{multline*}

By the proof above, we know that  $\displaystyle
\norm{f}_{BMO^\alpha}\sim \sup_\tau
\left(\frac{1}{P(\tau<\infty)^{1+2\alpha}}\int_{\widehat{\tau}}
\abs{d_kf}^2dP\otimes dm\right)^{1/2}$.
\end{proof}

In the following, we will give the proof of Theorem \ref{Thm:carleson inequality}.

\begin{proof}[Proof of Theorem \ref{Thm:carleson inequality}]
First, assume that $\mu$ is a bounded $\alpha$-Carleson measure, $f=(f_n)_{n\ge 0}$ is adapted. For any $\lambda>0,$ define the stopping time
$\tau=\inf\{n:|f_n|>\lambda\}.$ Then we have
\begin{equation*}
Mf(\omega)=\chi_{\{\tau<\infty\}}(\omega)\ \text{and}\ \{(\omega, k):|f_k|>\lambda\}\subset\{(\omega, k):k\ge \tau(\omega),\  \tau(\omega)<\infty\}.
\end{equation*}
Hence, \begin{align*}
\int_{\Omega\times N}|f_k|^p\mu_k dP\otimes dm&=p\int_0^\infty \lambda^{p-1}\mu({\{(\omega,k):\abs{f_k}>\lambda\}})d\lambda\\
&\le p\int_0^\infty \lambda^{p-1}\mu(\{(\omega, k):k\ge \tau(\omega),\ \tau(\omega)<\infty\})d\lambda\\
&\le \norm{\abs{\mu}}_\alpha p\int_0^\infty \lambda^{p-1}P(\tau<\infty)^{1+2\alpha}d\lambda\\
&=\norm{\abs{\mu}}_\alpha p\int_0^\infty \lambda (P(\{Mf>\lambda\}))^{2\alpha}\ \lambda^{p-2}P(\{Mf>\lambda\})d\lambda\\
&\le \norm{\abs{\mu}}_\alpha \norm{Mf}_{L^{\frac{1}{2\alpha},\infty}} \cdot p\int_0^\infty  \lambda^{p-2}P(\{Mf>\lambda\})d\lambda\\
&\le \frac{p}{p-1}\norm{\abs{\mu}}_\alpha \norm{Mf}_{L^{\frac{1}{2\alpha}}}\norm{Mf}_{L^{p-1}}.
\end{align*}
Thus we proved the inequality \eqref{carleson ine}.
Reversely, we assume that \eqref{carleson ine} holds for some $1<p<\infty$ with a constant $C_p$. For any stopping time $\tau,$ let $f_n(\omega)=\chi_{\{\tau\le n\}}(\omega),$ for any $\omega \in \Omega,\ n\ge 0$. Then $Mf(\omega)=\chi_{\{\tau<\infty\}}(\omega),$ and
\begin{align*}
&\int_{\Omega\times N}\chi_{\{\tau\le k\}}\mu_k dP\otimes dm=\int_{\Omega\times N}|f_k|^p\mu_k dP\otimes dm\le C_p\norm{Mf}_{L^{\frac{1}{2\alpha}}}\norm{Mf}_{L^{p-1}}^{p-1}=C_p \abs{\{\tau<\infty\}}^{1+2\alpha}.
\end{align*}
This means that $\displaystyle \norm{\abs{\mu}}_\alpha\le C_p$. We complete the proof.
\end{proof}
\begin{rem}
In particular, if $d\mu=\abs{d_kf}^2 dP\otimes dm$ with $f=(f_n)_{n\ge 0}$ is a martingale, then, combined with Theorem \ref{Thm:carleson chara}, we have that $f\in BMO^\alpha,$ if and only if
\begin{align*}
\int_{\Omega\times N}\abs{f_k}^p \abs{d_kf}^2 dP\otimes dm=\int_{\Omega\times N}\abs{f_k}^p d\mu\le C_p\norm{Mf}_{L^{\frac{1}{2\alpha}}}\norm{Mf}_{L^{p-1}}^{p-1},
\end{align*}for all adapted $f=(f_n)_{n\ge 0}$ and some $1<p<\infty.$
\end{rem}

\section{Applications}\label{Section:App}
In this section, we will give some applications of our main theorem.
In order to adapt our result to the applications we need the
following remark.

\begin{rem}\label{Remark:vector charac}
Theorem \ref{Thm:carleson chara} can also be stated in a
Hilbert-valued setting. If $f$ takes value in a Hilbert space
$\mathbb{H}$ and  the absolute values in the statements are replaced
by the norm in $\mathbb{H}$, then the result holds also.
\end{rem}

For more information about the vector-valued martingales, we refer the reader to \cite{Liu, Martinez, Torrea, MarTorrea}.

\subsection{The boundedness of martingale transform on $BMO^\alpha$-martingale spaces}
\begin{thm}
Let $\set{v_n}_{n\ge 0}$ be a uniformly bounded $\F_n$-predictable
sequence. Then the martingale transform given by
$$(Tf)_n=\sum_{k=0}^nv_kd_kf$$ is bounded on $BMO^\alpha$-martingale
spaces.
\end{thm}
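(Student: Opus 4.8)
The plan is to use the characterization of $BMO^\alpha$ provided by Theorem \ref{Thm:carleson chara}: a martingale $g$ lies in $BMO^\alpha$ if and only if the measure $|d_kg|^2\,dP\otimes dm$ is a bounded $\alpha$-Carleson measure, with comparable norms. So it suffices to control the Carleson norm of $|d_k(Tf)|^2\,dP\otimes dm$ by that of $|d_kf|^2\,dP\otimes dm$. First I would record that $d_k(Tf)=v_k\,d_kf$, so that $|d_k(Tf)|^2=|v_k|^2|d_kf|^2\le \|\{v_n\}\|_\infty^2\,|d_kf|^2$ pointwise on $\Omega\times\N$, where $\|\{v_n\}\|_\infty=\sup_n\|v_n\|_\infty$.

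Next, for an arbitrary stopping time $\tau$, this pointwise bound gives
\begin{equation*}
\frac{1}{P(\tau<\infty)^{1+2\alpha}}\int_{\widehat{\tau}}|d_k(Tf)|^2\,dP\otimes dm
\le \|\{v_n\}\|_\infty^2\cdot\frac{1}{P(\tau<\infty)^{1+2\alpha}}\int_{\widehat{\tau}}|d_kf|^2\,dP\otimes dm.
\end{equation*}
Taking the supremum over all stopping times $\tau$ yields
$\||{\,|d_k(Tf)|^2dP\otimes dm}|\|_\alpha\le \|\{v_n\}\|_\infty^2\,\||{\,|d_kf|^2dP\otimes dm}|\|_\alpha$.

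Finally, I would invoke Theorem \ref{Thm:carleson chara} in both directions together with the norm equivalence recorded at the end of its proof, namely $\|g\|_{BMO^\alpha}\sim \sup_\tau\big(P(\tau<\infty)^{-1-2\alpha}\int_{\widehat{\tau}}|d_kg|^2dP\otimes dm\big)^{1/2}$. Applying this to $g=Tf$ and then to $g=f$ gives
\begin{equation*}
\norm{Tf}_{BMO^\alpha}\le C\Big(\sup_\tau\frac{1}{P(\tau<\infty)^{1+2\alpha}}\int_{\widehat{\tau}}|d_k(Tf)|^2dP\otimes dm\Big)^{1/2}
\le C\,\|\{v_n\}\|_\infty\,\norm{f}_{BMO^\alpha},
\end{equation*}
which is the desired boundedness. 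There is essentially no serious obstacle here; the only points needing care are the identity $d_k(Tf)=v_k d_kf$ (immediate from predictability, since $(Tf)_n-(Tf)_{n-1}=v_n d_n f$) and keeping track of the implicit constants from the two-sided equivalence in Theorem \ref{Thm:carleson chara}, which contribute the harmless factor $C$. One may also note that the same argument goes through verbatim in the Hilbert-valued setting of Remark \ref{Remark:vector charac}, since only the pointwise scalar bound $|d_k(Tf)|\le\|\{v_n\}\|_\infty|d_kf|$ on the norms is used.
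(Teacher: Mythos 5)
Your proposal is correct and follows essentially the same route as the paper: reduce to the fractional Carleson measure characterization of Theorem \ref{Thm:carleson chara}, use $d_k(Tf)=v_kd_kf$ and the uniform bound on $\set{v_n}$ to dominate the Carleson norm of $\abs{d_k(Tf)}^2dP\otimes dm$ by that of $\abs{d_kf}^2dP\otimes dm$, and then apply the equivalence in both directions. No issues.
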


\begin{proof}
For any $f\in BMO^\alpha,$  by Theorem \ref{Thm:carleson chara}, we
have that for any stopping time $\tau,$
$$\frac{1}{P(\tau<\infty)^{1+2\alpha}}\int_{\widehat{\tau}}\abs{d_kf}^2dP\otimes dm\le C\norm{f}_{BMO^\alpha}^2.$$
Since $\set{v_k}_{k\ge 0}$ is uniformly bounded, then for any
stopping time $\tau,$
\begin{multline*}
\frac{1}{P(\tau<\infty)^{1+2\alpha}}\int_{\widehat{\tau}}\abs{d_k(Tf)}^2dP\otimes
dm=\frac{1}{P(\tau<\infty)^{1+2\alpha}}\int_{\widehat{\tau}}\abs{v_kd_kf}^2dP\otimes
dm \\ \le C\cdot
\frac{1}{P(\tau<\infty)^{1+2\alpha}}\int_{\widehat{\tau}}\abs{d_kf}^2dP\otimes
dm\le C\norm{f}_{BMO^\alpha}^2.
\end{multline*}
Then by Theorem \ref{Thm:carleson chara}, we get $Tf\in BMO^\alpha$
and $\norm{Tf}_{BMO^\alpha}\le C\norm{f}_{BMO^\alpha}$.
\end{proof}

\subsection{The boundedness of square function on $BMO^\alpha$-martingale spaces}

\begin{thm}
The square function $\displaystyle S(f)=\left(\sum_{k=0}^\infty
|d_kf|^2\right)^{1/2}$ is bounded on $BMO^\alpha$-martingale spaces.
\end{thm}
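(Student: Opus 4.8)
The plan is to deduce this from the Carleson measure characterization in Theorem~\ref{Thm:carleson chara}, exactly as was done for the martingale transform. The key observation is that the square function $S(f)$ is itself (formally) a martingale whose difference sequence satisfies $|d_k S(f)|^2 \le |d_k f|^2$ in a suitable pointwise sense; more precisely, since $S(f)_n^2 = \sum_{k=0}^n |d_k f|^2$, we have $S(f)_n^2 - S(f)_{n-1}^2 = |d_n f|^2$, and hence for the process $g=S(f)$ one gets $|d_n g|^2 = |S(f)_n - S(f)_{n-1}|^2 \le S(f)_n^2 - S(f)_{n-1}^2 = |d_n f|^2$, using the elementary inequality $(a-b)^2\le a^2-b^2$ valid for $0\le b\le a$. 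Therefore for any stopping time $\tau$,
\begin{equation*}
\frac{1}{P(\tau<\infty)^{1+2\alpha}}\int_{\widehat{\tau}}\abs{d_k S(f)}^2 dP\otimes dm \le \frac{1}{P(\tau<\infty)^{1+2\alpha}}\int_{\widehat{\tau}}\abs{d_k f}^2 dP\otimes dm \le C\norm{f}_{BMO^\alpha}^2,
\end{equation*}
where the last inequality is the ``$(I)\Rightarrow(II)$'' direction of Theorem~\ref{Thm:carleson chara}.

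Next I would invoke the ``$(II)\Rightarrow(I)$'' direction of Theorem~\ref{Thm:carleson chara}, applied to the process $S(f)$, to conclude that $S(f)\in BMO^\alpha$ with $\norm{S(f)}_{BMO^\alpha}\le C\norm{f}_{BMO^\alpha}$. One subtlety is that Theorem~\ref{Thm:carleson chara} is stated for martingales, while $S(f)$ is only an adapted increasing process, not a martingale. To handle this cleanly I would note that the proof of $(II)\Rightarrow(I)$ only used the Burkholder--Gundy inequality together with Lemma~\ref{Lem:BMO a L2}, and the quantity $\norm{S(f)}_{BMO^\alpha}$ itself is defined through the $L^2$-oscillations $P(A)^{-1/2-\alpha}(\int_A |S(f)-S(f)_{n-1}|^2 dP)^{1/2}$; alternatively one can simply observe that the estimate $\norm{S(f)-S(f)_{\tau-1}}_{L^2}^2\le C\norm{(\sum_{k\ge \tau}|d_k S(f)|^2)^{1/2}\chi_{\{\tau<\infty\}}}_{L^2}^2$ needed in that argument can be replaced, for the increasing process $S(f)$, by the even more direct bound $|S(f)-S(f)_{\tau-1}|^2 \le \sum_{k\ge\tau}|d_k S(f)|^2$ (again using $(a-b)^2\le a^2-b^2$), so that Lemma~\ref{Lem:BMO a L2} and the Carleson bound above give the result without any appeal to martingale structure.

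I expect the main (and essentially only) obstacle to be this bookkeeping point: making precise that the characterization, or at least the quantitative chain of inequalities underlying it, applies to $S(f)$ even though $S(f)$ is not a martingale. Once one records the pointwise domination $|d_k S(f)| \le |d_k f|$ and the telescoping bound $|S(f)-S(f)_{\tau-1}|^2\le \sum_{k\ge\tau}|d_k S(f)|^2$, everything else is a routine repetition of the argument used for the martingale transform. I would therefore structure the write-up as: (1) establish $|d_k S(f)|^2 \le |d_k f|^2$ via $(a-b)^2\le a^2-b^2$; (2) transfer the Carleson bound for $|d_k f|^2$ to $|d_k S(f)|^2$; (3) combine with Lemma~\ref{Lem:BMO a L2} and the telescoping inequality to bound $\norm{S(f)}_{BMO^\alpha}$ by $C\norm{f}_{BMO^\alpha}$.
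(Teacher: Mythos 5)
Your route is genuinely different from the paper's. The paper treats $S(f)$ as the pointwise $\ell^2$-norm of the vector-valued martingale transform $Uf=\sum_k v_k d_kf$ with $v_k=(0,\dots,0,1,0,\dots)$, invokes the Hilbert-valued version of Theorem~\ref{Thm:carleson chara} (Remark~\ref{Remark:vector charac}) to get $Uf\in BMO^\alpha_{\ell^2}$, and then uses the reverse triangle inequality $\abs{S(f)-S_{n-1}(f)}\le\norm{Uf-U_{n-1}f}_{\ell^2}$ to transfer the bound to $S(f)$. This sidesteps entirely the issue you correctly identify, namely that $S(f)$ is not a martingale, at the cost of needing the vector-valued statement. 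Your approach stays scalar and elementary, exploiting the monotonicity of $S_n(f)$ via $(a-b)^2\le a^2-b^2$, and then feeding the resulting pointwise bound directly into Lemma~\ref{Lem:BMO a L2} and the Carleson estimate; that is a perfectly legitimate and arguably cleaner alternative.

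However, one of your two key inequalities is stated incorrectly. The bound $\abs{S(f)-S(f)_{\tau-1}}^2\le\sum_{k\ge\tau}\abs{d_kS(f)}^2$ is false in general: the increments $d_kS(f)$ are nonnegative, so $\bigl(\sum_{k\ge\tau}d_kS(f)\bigr)^2\ge\sum_{k\ge\tau}\abs{d_kS(f)}^2$, i.e.\ the inequality goes the wrong way (take $S_0=0$, $S_1=1$, $S_2=2$: the left side is $4$, the right side is $2$). What $(a-b)^2\le a^2-b^2$ actually gives, with $a=S(f)$ and $b=S_{\tau-1}(f)$, is
\begin{equation*}
\abs{S(f)-S_{\tau-1}(f)}^2\le S(f)^2-S_{\tau-1}(f)^2=\sum_{k\ge\tau}\abs{d_kf}^2 ,
\end{equation*}
with $d_kf$ on the right, not $d_kS(f)$. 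This is exactly what you need: integrating over $\{\tau<\infty\}$ and applying the $(I)\Rightarrow(II)$ direction of Theorem~\ref{Thm:carleson chara} together with Lemma~\ref{Lem:BMO a L2} finishes the proof. With this correction your step (1), the pointwise bound $\abs{d_kS(f)}\le\abs{d_kf}$, becomes superfluous. So the gap is a conflation of $(S_k-S_{k-1})^2$ with $S_k^2-S_{k-1}^2$ in the telescoping step; once repaired, the argument is complete.
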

\begin{proof}
Let us consider the $\ell^2$-valued martingale transform:
$$Uf=\sum_{k=1}^\infty v_kd_kf\quad  \hbox{and}\quad U_nf=\sum_{k=1}^nv_k d_kf$$
with $v_k=(0,\cdots,0,1,0,\cdots)$ for any $k\ge 1.$ It maps a
$\R$-valued martingale into an $\ell^2$-valued martingale. So, for
any $f\in BMO^\alpha,$ by Theorem \ref{Thm:carleson chara},
\begin{align*}
\frac{1}{P(\tau<\infty)^{1+2\alpha}}\int_{\widehat{\tau}}
\norm{d_k(Uf)}_{\ell^2}^2dP\otimes dm
=\frac{1}{P(\tau<\infty)^{1+2\alpha}}\int_{\widehat{\tau}}\abs{d_kf}^2dP\otimes
dm\le C\norm{f}_{BMO^\alpha}^2.
\end{align*}
Therefore, by Remark \ref{Remark:vector charac}, we have $Uf\in
BMO_{\ell^2}^\alpha$ and $\norm{U(f)}_{BMO_{\ell^2}^\alpha}\le C\norm{f}_{BMO^\alpha}.$ Since, for any $n\ge 1,$
$$\norm{U_nf}_{\ell^2}=\left(\sum_{k=1}^n\abs{d_k f}^2\right)^{1/2}=S_n(f),$$
we have, for any $A\in \F_n,$
\begin{multline*}
P(A)^{-1-2\alpha}\int_A\abs{S(f)-S_{n-1}(f)}^2dP
=P(A)^{-1-2\alpha}\int_A\abs{\norm{Uf}_{\ell^2}-\norm{U_{n-1}(f)}_{\ell^2}}^2dP\\
\le P(A)^{-1-2\alpha}\int_A\norm{U(f)-U_{n-1}(f)}_{\ell^2}^2dP\le
C\norm{U(f)}^2_{BMO_{\ell^2}^\alpha}.
\end{multline*}
Whence, $\norm{S(f)}_{BMO^\alpha}\le
C\norm{U(f)}_{BMO_{\ell^2}^\alpha}\le C\norm{f}_{BMO^\alpha}$.
\end{proof}

\subsection{The boundedness of maximal function on $BMO^\alpha$-martingale spaces}

\begin{thm}\label{Thm:maximal}
The maximal function $\displaystyle M(f)=\sup_{n\ge 0}|f_n|$ is bounded on $BMO^\alpha$-martingale spaces.
\end{thm}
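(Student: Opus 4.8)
The plan is to mimic the strategy used for the martingale transform and the square function: realize $M(f)$ (or rather its increments) as something controlled by a vector-valued martingale transform of $f$, apply the Hilbert-valued version of Theorem \ref{Thm:carleson chara} (Remark \ref{Remark:vector charac}), and then pass back from the vector-valued object to the scalar maximal function by a pointwise domination. Concretely, I would again consider the $\ell^2$-valued (in fact $\ell^\infty$ would be even more natural, but $\ell^2$ dominates $\ell^\infty$) martingale transform $Uf=\sum_{k\ge 1}v_kd_kf$ with $v_k=(0,\dots,0,1,0,\dots)$ as in the square-function proof, so that $U_nf=(d_1f,\dots,d_nf,0,\dots)$. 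Then for each $n$,
\begin{equation*}
|f_n-f_{n-1}|=|d_nf|\le \norm{U_nf-U_{n-1}f}_{\ell^2},
\end{equation*}
and more importantly $|f_n|\le |f_0|+\sum_{k=1}^n|d_kf|$, so $M(f)\le |f_0|+\norm{Uf}_{\ell^1}$; however the cleaner route is to directly estimate increments of $M(f)$.

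The key steps, in order, would be: (i) Fix $f\in BMO^\alpha$ and recall from the square-function proof that $Uf\in BMO^\alpha_{\ell^2}$ with $\norm{Uf}_{BMO^\alpha_{\ell^2}}\le C\norm{f}_{BMO^\alpha}$. (ii) Show the pointwise bound $|M(f)-M_{n-1}(f)|\le \sum_{k=n}^\infty |d_kf|$ where $M_{n-1}(f)=\sup_{j\le n-1}|f_j|$; indeed $M(f)=\sup_j|f_j|$ and for $j\ge n$, $|f_j|\le |f_{n-1}|+\sum_{k=n}^j|d_kf|\le M_{n-1}(f)+\sum_{k=n}^\infty|d_kf|$, while for $j\le n-1$, $|f_j|\le M_{n-1}(f)$, so $M(f)\le M_{n-1}(f)+\sum_{k=n}^\infty|d_kf|$ and the reverse inequality $M_{n-1}(f)\le M(f)$ is trivial. (iii) Bound $\sum_{k=n}^\infty|d_kf|$ in $L^2(A)$ for $A\in\F_n$. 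Here is the delicate point: $\sum_{k=n}^\infty|d_kf|=\norm{U(f)-U_{n-1}(f)}_{\ell^1}$, which is an $\ell^1$-norm, not an $\ell^2$-norm, so Remark \ref{Remark:vector charac} does not apply directly. (iv) Conclude $\norm{M(f)}_{BMO^\alpha}\le C\norm{f}_{BMO^\alpha}$ via Definition \ref{def of BMO alpha}, using that $M_{n-1}(f)$ is $\F_{n-1}$-measurable.

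The main obstacle is precisely step (iii): converting from the $\ell^2$-valued $BMO^\alpha$ estimate to a bound on the $\ell^1$-type quantity $\sum_{k\ge n}|d_kf|$. I see two ways around it. The first is to avoid $M(f)$ directly and instead dominate $M(f)\le S^{*}(f)$-type quantities — but the cleanest is to use the stopping-time form from Lemma \ref{Lem:BMO a L2} together with Burkholder–Davis–Gundy exactly as in the proof of $(II)\Rightarrow(I)$ of Theorem \ref{Thm:carleson chara}: for any stopping time $\tau$, using $M(f)-M_{\tau-1}(f)\le \sup_{j\ge\tau}|f_j-f_{\tau-1}|\le \sup_{j}|f_{j\vee\tau}-f_\tau|+|f_\tau-f_{\tau-1}|$, and then $\norm{\sup_j|f_{j\vee\tau}-f_\tau|}_{L^2}\le C\norm{(\sum_{k\ge\tau}|d_kf|^2)^{1/2}\chi_{\{\tau<\infty\}}}_{L^2}$ by Doob's maximal inequality applied to the stopped martingale $\tilde f$ followed by BDG, which is exactly the quantity appearing in \eqref{est inte f tau}. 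This gives
\begin{equation*}
P(\tau<\infty)^{-1/2-\alpha}\norm{M(f)-M_{\tau-1}(f)}_{L^2}\le C\Big(\frac{1}{P(\tau<\infty)^{1+2\alpha}}\int_{\widehat\tau}|d_kf|^2dP\otimes dm\Big)^{1/2}\le C\norm{f}_{BMO^\alpha},
\end{equation*}
where in the last step I invoke the equivalence established at the end of the proof of Theorem \ref{Thm:carleson chara}. Finally, a stopping-time analogue of Lemma \ref{Lem:BMO a L2} for the sequence $M(f)$ (the nonnegative adapted process $M_n(f)$ plays the role that $f_{n-1}$ did) yields $\norm{M(f)}_{BMO^\alpha}\le C\norm{f}_{BMO^\alpha}$, completing the proof. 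I would therefore write the argument through this stopping-time/BDG route rather than the $\ell^2$-transform route, since it sidesteps the $\ell^1$ versus $\ell^2$ mismatch entirely.
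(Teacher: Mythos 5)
Your proposal is correct, and it is worth noting that the paper itself does not actually prove this theorem: it only remarks that the argument is ``similar to the proof of the boundedness of the maximal function on $BMO$-martingale spaces'' and points to \cite{Long}. So there is no detailed proof in the paper to compare against; what you have written is a self-contained argument, and it is the natural adaptation of the classical $\alpha=0$ proof. Your key chain is sound: on $\set{\tau<\infty}$ one has $0\le M(f)-M_{\tau-1}(f)\le \sup_{j\ge\tau}\abs{f_j-f_{\tau-1}}\le \sup_j\abs{f_{j\vee\tau}-f_\tau}+\abs{f_\tau-f_{\tau-1}}$, Doob's $L^2$ maximal inequality applied to the stopped martingale $\tilde f_j=f_{j\vee\tau}-f_\tau$ together with the orthogonality of martingale differences (or BDG) bounds the $L^2$ norm of this by $C\norm{(\sum_{k\ge\tau}\abs{d_kf}^2)^{1/2}\chi_{\set{\tau<\infty}}}_{L^2}$, and Theorem \ref{Thm:carleson chara} converts that into $CP(\tau<\infty)^{1/2+\alpha}\norm{f}_{BMO^\alpha}$; specializing to the stopping times $\tau_A$ ($=n$ on $A\in\F_n$, $=\infty$ off $A$) then gives exactly the $BMO^\alpha$ bound for the adapted process $M_n(f)$ --- and this last reduction needs only the trivial direction of Lemma \ref{Lem:BMO a L2}, which indeed works for any adapted process, not just martingales. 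You were also right to discard your first route: the $\ell^2$-valued transform controls $\norm{U(f)-U_{n-1}(f)}_{\ell^2}=(\sum_{k\ge n}\abs{d_kf}^2)^{1/2}$, whereas the naive increment bound for $M(f)$ produces the $\ell^1$ quantity $\sum_{k\ge n}\abs{d_kf}$, which is genuinely not controlled; the Doob-plus-square-function detour is the correct fix.
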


The proof of Theorem \ref{Thm:maximal} is similar to the proof of the boundedness of maximal function on $BMO$-martingale spaces with small variations as in \cite{Long}.

\subsection{UMD Banach lattice}

In \cite{Burkholder 1981}, Burkholder considered the UMD spaces as in the following.
\begin{defn}
A Banach space $\B$ is said to be a UMD space if for all $\B$-valued
martingales $\set{f_n}_{n\ge 0}$ and all sequences
$\set{\varepsilon_n}_{n\ge 0}$ with $\varepsilon_n=\pm 1,$
$$\norm{\varepsilon_0 d_0f+\cdots+\varepsilon_k d_kf}_{L^p_\B}
\le C_p\norm{d_0f+\cdots+d_kf}_{L^p_{\B}}, \quad \hbox{for all }
k\ge 0,$$ where $1<p<\infty$ and the constant $C_p>0$ is independent
of $k.$
\end{defn}

It is known that the existence of one $p_0$ satisfying the
inequality is enough to assure the existence of the rest of $p,\
1<p<\infty.$ In this section, we concentrate on the Banach lattice.
Let $\B$ be a Banach lattice. Without loss of generality we assume
that $\B$ is a Banach lattice of measurable functions on measure
space $(\Omega, dP)$. We refer the reader to \cite{LindenTza} for
more information on Banach lattices.

\iffalse
 The following lemma is well known; see \cite{Bourgain, Rubio de F}.
\begin{lem}\label{Lem:UMD square}
Let $\B$ be a Banach lattice. The following statements are equivalent:
\begin{enumerate}[(I)]
\item $\B$ is a \textup{UMD} space;
\item there exist $1<p<\infty$ and a constant $C>0$ such that
$$C^{-1}\norm{f}_{L^p_{\B}}\le \norm{S(f)}_{L^p_\B}\le C\norm{f}_{L^p_{\B}},$$
for any $\B$-valued martingale $f$, where
$S(f)(x)=\Big(\sum_{k=1}^{\infty}|d_kf(x)|^2\Big)^{1/2}$.
\end{enumerate}
\end{lem}
\fi
By Theorem 3.4 in \cite{Jiao} and our characterization theorem of $BMO^\alpha$-martingales in  Theorem \ref{Thm:carleson chara},  we can get a characterization of UMD Banach lattice in the following theorem.

\begin{thm}\label{Thm:UMD by Carleson}
Let $\B$ be a Banach lattice. The following statements are equivalent:
\begin{enumerate}[(I)]
\item $\B$ is a \textup{UMD} space;
\item there exists a constant $C>0$ such that
$$
C^{-1}\norm{f}_{BMO^\alpha_\B}\le \sup_{\tau}
\left(\frac{1}{P(\tau<\infty)^{1+2\alpha}}\int_{\widehat{\tau}}\norm{d_kf}_{\B}^2dP\otimes
dm\right)^{1/2}\le C\norm{f}_{BMO^\alpha_\B},
$$
for any $\B$-valued martingale $f.$
\end{enumerate}
\end{thm}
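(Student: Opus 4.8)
The plan is to transcribe the proof of Theorem~\ref{Thm:carleson chara} into the $\B$-valued setting, the single new ingredient being that the scalar Burkholder--Gundy inequality is replaced by the $L^2_\B$-square-function estimate that characterizes UMD Banach lattices. Here $S(g)=\big(\sum_k|d_kg|^2\big)^{1/2}\in\B$ is the \emph{lattice} square function, and the quantity $\int_{\widehat{\tau}}\norm{d_kf}_\B^2\,dP\otimes dm$ in (II) is to be read as $\int_{\{\tau<\infty\}}\norm{\big(\sum_{k\ge\tau}|d_kf|^2\big)^{1/2}}_\B^2\,dP$; for $\B$ a Hilbert space (in particular $\B=L^2$) the two agree, consistently with Remark~\ref{Remark:vector charac}. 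Two facts are used without proof: Lemma~\ref{Lem:BMO a L2} holds verbatim for $\B$-valued $L^2$-martingales, by the same argument, so that $\norm{f}_{BMO^\alpha_\B}=\sup_\tau P(\tau<\infty)^{-1/2-\alpha}\norm{f-f_{\tau-1}}_{L^2_\B}$; and, by Theorem~3.4 in \cite{Jiao} (a martingale counterpart of a theorem of Bourgain and Rubio de Francia), the UMD Banach lattices are exactly those $\B$ for which $C^{-1}\norm{g}_{L^2_\B}\le\norm{S(g)}_{L^2_\B}\le C\norm{g}_{L^2_\B}$ holds for all $\B$-valued $L^2$-martingales $g$ (equivalently, for which the case $\alpha=0$ of the present theorem holds) --- and then the same estimate also holds for the localized martingale transforms and for the $\set{\F_{k\vee\tau}}_k$-shifted martingales occurring in the proof of Theorem~\ref{Thm:carleson chara}.

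For $(I)\Rightarrow(II)$, assume $\B$ is UMD. The right-hand inequality is obtained exactly as in $(I)\Rightarrow(II)$ of Theorem~\ref{Thm:carleson chara}: for $n\ge1$ and $A\in\F_n$, applying the localized square function estimate to the martingale transform of $f$ that above level $n$ agrees with $(f-f_{n-1})\chi_A$ --- its square function dominates $\big(\sum_{k\ge n}|d_kf|^2\big)^{1/2}\chi_A$ up to the single boundary term $|d_nf|\chi_A$, absorbed as in the scalar proof --- yields $\int_A\norm{\big(\sum_{k\ge n}|d_kf|^2\big)^{1/2}}_\B^2\,dP\le C\int_A\norm{f-f_{n-1}}_\B^2\,dP$, which by the $BMO^\alpha_\B$ condition is at most $C\,P(A)^{1+2\alpha}\norm{f}_{BMO^\alpha_\B}^2$; one then specializes $\tau=\tau_A$ and takes the supremum. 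The left-hand inequality is obtained exactly as in $(II)\Rightarrow(I)$ of Theorem~\ref{Thm:carleson chara}: the reverse square function estimate, applied to the shifted martingale $\tilde f$ generated by $f-f_\tau$ over $\set{\F_{k\vee\tau}}_k$, gives $\norm{f-f_\tau}_{L^2_\B}^2\le C\int_{\{\tau<\infty\}}\norm{\big(\sum_{k>\tau}|d_kf|^2\big)^{1/2}}_\B^2\,dP$, and then $\norm{f-f_{\tau-1}}_{L^2_\B}^2\le C\big(\norm{f-f_\tau}_{L^2_\B}^2+\norm{f_\tau-f_{\tau-1}}_{L^2_\B}^2\big)$ together with Lemma~\ref{Lem:BMO a L2}, exactly as in~\eqref{est inte f tau}, finishes.

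For $(II)\Rightarrow(I)$, assume (II) holds for the fixed $\alpha$; by the characterization quoted above it suffices to produce the two-sided $L^2_\B$-square-function estimate. I would argue by contraposition: if either inequality failed, there would be a sequence of $\B$-valued martingales $g$ along which $\norm{S(g)}_{L^2_\B}/\norm{g}_{L^2_\B}$, or its reciprocal, blows up; from such a sequence, by running independent copies along a stopping time whose tail distribution is tuned to the weight $P(\tau<\infty)^{1+2\alpha}$, one manufactures martingales $f$ for which $\sup_\tau\big(P(\tau<\infty)^{-1-2\alpha}\int_{\widehat{\tau}}\norm{d_kf}_\B^2\,dP\otimes dm\big)^{1/2}$ and $\norm{f}_{BMO^\alpha_\B}$ are wildly out of scale, contradicting (II). Hence both bounds hold and $\B$ is UMD. (The upper one of the two is in fact accessible directly, since testing (II) at the deterministic time $\tau\equiv0$ already gives $\norm{S(g)}_{L^2_\B}\le C\norm{g}_{BMO^\alpha_\B}$ for every $g$.)

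The main obstacle is twofold. Conceptually --- and this is really the content of the statement --- one must recognize that for a \emph{lattice} $\B$ the square function appearing in (II) is the lattice square function $\big(\sum_k|d_kf|^2\big)^{1/2}$: only with this reading is the associated $L^2_\B$-estimate \emph{exactly} the UMD property, whereas the scalar choice $\sum_k\norm{d_kf}_\B^2$ would correspond to the far more restrictive requirement that $\B$ be isomorphic to a Hilbert space, as one already sees from $\B=L^r$ with $r\neq2$. Technically, $(I)\Rightarrow(II)$ is a verbatim transcription of the proof of Theorem~\ref{Thm:carleson chara} with the square function estimate inserted, but $(II)\Rightarrow(I)$ is not a direct specialization: extracting the lower square function bound from the supremum form of (II), in the presence of the $\alpha$-weights, requires the quantitative amplification argument sketched above, and this is the step I expect to be the hardest.
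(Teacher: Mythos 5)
Your reading of the statement is the right one, and it is worth saying clearly: the integrand in (II) must be understood via the lattice square function, i.e.\ as $\int_{\set{\tau<\infty}}\bigl\Vert\bigl(\sum_{k\ge\tau}|d_kf|^2\bigr)^{1/2}\bigr\Vert_\B^2\,dP$, since the literal quantity $\sum_{k\ge\tau}\norm{d_kf}_\B^2$ would (already at $\tau\equiv 0$, $\alpha=0$) force $\B$ to behave like a Hilbert space and the equivalence would fail for $\B=L^r$, $r\ne 2$. Be aware, though, that the paper does not actually write out a proof of this theorem: it is obtained by invoking Theorem~3.4 of \cite{Jiao} (the $\alpha=0$ UMD-lattice case) together with Theorem~\ref{Thm:carleson chara}, so the substance of the converse direction lives in the cited reference. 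Your $(I)\Rightarrow(II)$ is a sound transcription of the proof of Theorem~\ref{Thm:carleson chara} with the Bourgain--Rubio de Francia square-function equivalence inserted (and in fact the localization to $A\in\F_n$ needs no boundary correction: for $A\in\F_n$ the process $g_m=(f_m-f_{n-1})\chi_A$, $m\ge n$, is itself a martingale whose lattice square function is exactly $\bigl(\sum_{k\ge n}|d_kf|^2\bigr)^{1/2}\chi_A$).

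The genuine gap is $(II)\Rightarrow(I)$. Your ``quantitative amplification'' step --- running independent copies along a stopping time tuned to the weight $P(\tau<\infty)^{1+2\alpha}$ --- is a hope, not an argument: no construction is given, and the obstruction is structural. Condition (II) compares the Carleson supremum with $\norm{f}_{BMO^\alpha_\B}$, not with $\norm{f}_{L^2_\B}$, and $\norm{\cdot}_{BMO^\alpha_\B}$ is neither dominated by nor comparable to $\norm{\cdot}_{L^2_\B}$; in particular your ``accessible'' sub-step only yields $\norm{S(g)}_{L^2_\B}\le C\norm{g}_{BMO^\alpha_\B}$, which is not the upper square-function bound needed for UMD, and the lower bound suffers the symmetric defect because the supremum over $\tau$ cannot be replaced by its value at $\tau\equiv 0$. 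What is actually needed is a localization that makes the two norms comparable so that the weights $P(A)^{1+2\alpha}$ cancel --- e.g.\ testing (II) on martingales started at time $n$ and supported on a single $\F_n$-atom, for which $\norm{\cdot}_{BMO^\alpha_\B}$ collapses to $P(A)^{-1/2-\alpha}$ times an $L^2_\B$-norm --- thereby recovering the unweighted two-sided estimate $\norm{g}_{L^2_\B}\sim\norm{S(g)}_{L^2_\B}$ and hence UMD; this is precisely the content of Theorem~3.4 in \cite{Jiao}, which the paper cites in place of a proof. As written, your converse direction does not close.
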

\end{document}